\DeclarePairedDelimiter{\abs}{\lvert}{\rvert}
\DeclarePairedDelimiter{\norm}{\lVert}{\rVert}
\theoremstyle{plain}
\newtheorem{thm}{Theorem}[section]
\newtheorem{lemma}[thm]{Lemma}
\theoremstyle{remark}
\theoremstyle{definition}
\begin{document}
	\title{A note on nonlinear critical problems involving the Grushin Subelliptic Operator: bifurcation and multiplicity results}
	\author{Giovanni Molica Bisci \and Paolo Malanchini \and Simone Secchi}
	\maketitle
	\setcounter{secnumdepth}{2} 
	\setcounter{tocdepth}{2}
	
\begin{abstract}
We consider the  boundary value problem
\begin{equation*}
 \begin{cases}
 	-\Delta_\gamma u  = \lambda u + \left\vert u \right\vert^{2^*_\gamma-2}u ~&\text{in}~ \Omega\\
 	u = 0  ~&\text{on}~ \partial\Omega,
 \end{cases}
\end{equation*}
where $\Omega$ is an open bounded domain in $\mathbb{R}^N$, $N \geq 3$, while $\Delta_\gamma$ is the Grushin operator
\begin{displaymath}
\Delta_	\gamma u(z) = \Delta_x u(z) + \abs{x}^{2\gamma} \Delta_y u (z) \quad (\gamma\ge 0).
\end{displaymath} 
We prove a multiplicity and bifurcation result for this problem, extending the  results of Cerami, Fortunato and Struwe in \cite{cerami1984bifurcation} and of Fiscella, Molica Bisci and Servadei in \cite{fiscella2018multiplicity}.
\end{abstract}

\section{Introduction}

The degenerate differential operator 
\begin{displaymath}
	\frac{\partial^2}{\partial x^2} + x \frac{\partial^2}{\partial y^2} \quad \hbox{in $\mathbb{R}^2$}
\end{displaymath}
appeared for the first time one century ago in the seminal paper of Tricomi \cite{tricomi}. Several years later Baouendi \cite{baouendi} and Grushin \cite{grushin} generalized Tricomi's definition to what we now call the Grushin operator $\Delta_\gamma$, which was later studied extensively. We can roughly describe it as a weighted Laplace operator in which some of the independent variables are multiplied by a power-like function with a non-negative exponent $2\gamma$. When $\gamma=0$, the Grushin operator reduces formally to the basic Laplace operator. More precisely, the euclidean space $\mathbb{R}^N$ is split as $\mathbb{R}^m \times \mathbb{R}^\ell$, where $m \geq 1$, $\ell \geq 1$ and $m+\ell=N \geq 3$. A generic point $z \in \mathbb{R}^N$  has coordinates
\begin{displaymath}
	z = (x,y)=\left( x_1,\ldots,x_m,y_1,\ldots,y_\ell \right).
\end{displaymath}
For a differentiable function $u$ we define the Grushin gradient
\begin{displaymath}
\nabla_\gamma u (z) =  (\nabla_x u(z), \abs{x}^\gamma\nabla_y u(z))= (\partial_{x_1} u(z) , \dots, \partial_{x_m}u(z), \abs{x}^\gamma \partial_{y_{1}}u(z), \dots, \abs{x}^\gamma \partial_{y_{\ell}}u (z)).
\end{displaymath}
The Grushin operator $\Delta_\gamma$ is defined by\footnote{Some authors use the slightly different definition $\Delta_\gamma = \Delta_x + \left(1+\gamma \right)^2 \vert x \vert^{2\gamma}\Delta_y$.}
\begin{displaymath}
\Delta_	\gamma u(z) = \Delta_x u(z) + \abs{x}^{2\gamma} \Delta_y u (z),
\end{displaymath} 
where $\Delta_x$ and $\Delta_y$ are the Laplace operators in the variables $x$ and $y,$ respectively. A crucial property of the Grushin operator is that it is not uniformly elliptic in the space $\mathbb{R}^N$, since it is degenerate on the subspace $\{0\} \times \mathbb{R}^\ell$. For more technical reasons, see for instance the survey \cite{egorov}, the Grushin operator belongs to the class of \emph{subelliptic} operators, which halfway between elliptic and hyperbolic operators.

As remarked in \cite{kogojlanconelli}, if $\gamma$ is a non-negative integer, the operator $\Delta_\gamma$ falls into the class of H\"{o}rmander operators which are sum of squares of vector fields generating a Lie algebra of maximum rank at any point of the space. For a generic $\gamma \geq 0$, it is possible to embed $\Delta_\gamma$ into a family of operators of the form
\begin{displaymath}
	\Delta_\lambda = \sum_{i=1}^N \partial_{x_i} \left( \lambda_i^2 \partial_{x_i} \right),
\end{displaymath}
which have been studied from a geometrical viewpoint in \cite{franchilanconelli1982,franchilanconelli1983,franchilanconelli1984}. Here $\lambda_1,\ldots,\lambda_N$ are given functions which satisfy suitable conditions. The vector fields
\begin{displaymath}
	X_i = \lambda_i \partial_{x_i}, \quad i=1,\ldots,N
\end{displaymath}
define a weighted \emph{gradient} $\nabla_\lambda$.

\bigskip

In this paper we will be concerned with the semilinear elliptic equation
\begin{equation}
\label{problem:grushin}
 \begin{cases}
 	-\Delta_\gamma u  = \lambda u + \abs{u}^{2^*_\gamma-2}u &\text{in $\Omega$}\\
 	u = 0  ~&\text{on $\partial\Omega$},
 \end{cases}
\end{equation}
where $\Omega$ is a bounded domain of $\mathbb{R}^N$, $\lambda>0$ is a parameter,
\begin{gather*}
N_\gamma = m + (1+\gamma) \ell
\end{gather*}
is the homogeneous dimension associated to the decomposition $N=m+\ell$, and
\begin{gather*}
	2_\gamma^* = \frac{2N_\gamma}{N_\gamma -2}
\end{gather*}
is the Sobolev critical exponent associated to $N_\gamma$.
The case $\gamma=0$ corresponds (formally) to the celebrated Nonlinear Schr\"{o}dinger Equation with critical Sobolev exponent in a bounded domain $\Omega$ of $\mathbb{R}^N$, namely
\begin{equation}
\label{problem:1}
\begin{cases}
	-\Delta u = \lambda u + \vert u \vert^{2^*-2} u &\hbox{in $\Omega$} \\
	u=0 &\hbox{on $\partial \Omega$},
\end{cases}
\end{equation}
whose analysis was initiated by Brezis and Nirenberg in \cite{brezisnirenberg}. Here $N \geq 3$, $2^*=2N/(N-2)$ is the critical Sobolev exponent, and $\lambda >0$ is a positive parameter. By a careful comparison argument, the authors proved that for $N \geq 4$,
\begin{displaymath}
	\inf_{\substack{u \in H_0^1(\Omega) \\ \int_\Omega \vert u \vert^{2^*} =1}} \left\lbrace \int_\Omega \vert \nabla u \vert^2 - \lambda \int_\Omega \vert u \vert^2 \right\rbrace < \inf_{\substack{u\in H_0^1(\Omega) \\ \int_\Omega \vert u \vert^{2^*}=1}} \int_\Omega \vert \nabla u \vert^2 = S,
\end{displaymath}
where $S$ is the best constant for the continuous Sobolev embedding $H_0^1(\Omega) \subset L^{2^*}(\Omega)$. Calling $\lambda_1$ the first eigenvalue of the operator~$-\Delta$ defined on $H_0^1(\Omega) \subset L^2(\Omega)$, the existence of a positive solution to \eqref{problem:1} follows for every $\lambda \in (0,\lambda_1)$ if $N \geq 4$. The case $N=3$ is harder, and the authors proved the existence of a positive solution when $\Omega$ is a ball and $\lambda$ is sufficiently close to $\lambda_1$.

The paper \cite{brezisnirenberg} marked the beginning of an endless stream of efforts to extend the seminal results. In 1984, Cerami, Fortunato and Struwe proved in \cite{cerami1984bifurcation} a bifurcation result for problem \eqref{problem:1}. 
Similar multiplicity and bifurcation results have been proved for several classes of variational operators. For instance, in \cite{fiscella2018multiplicity} the authors considered a rather general family of fractional (i.e. nonlocal) operators which contains the usual fractional Laplace operator in $\mathbb{R}^N$.

\bigskip

Our aim is to extend the main result of \cite{cerami1984bifurcation} to problem \eqref{problem:grushin}.
With the notation introduced in Sections \ref{sec:preliminaries} and \ref{sec:eigenvalues}, our main result can be stated as follows. We denote by $\lambda_k$, $k=1,2,\ldots$, the Dirichlet eigenvalues of $-\Delta_\gamma$, see Section \ref{sec:eigenvalues}.
\begin{thm}
	\label{thm:main}
		Let $\Omega$ an open bounded subset of $\mathbb{R}^N$, $N\ge 3.$ Let $\lambda\in\mathbb{R}$ and $k \in \mathbb{N}$ be the smallest integer such that $\lambda_{k-1} \leq \lambda < \lambda_k$. Let $\lambda^* = \lambda_k$ and let $m\in\mathbb{N}$ be its multiplicity. Assume that
	\begin{equation}
		\label{eq:interval}
		\lambda^* - \frac{S_\gamma}{\vert \Omega \vert^{\frac{N_\gamma}{2}}}
		 < \lambda < \lambda^*,
	\end{equation}
	where $S_\gamma$ is the best critical Sobolev constant defined in (\ref{def:S}) and $\vert \Omega \vert$ denotes the volume of $\Omega$. Under these assumptions, problem (\ref{problem:grushin}) admits at least $m$ pairs of nontrivial solutions $(-u_{\lambda,i}, u_{\lambda,i})$, $i=1,\ldots,m$, such that
	\begin{displaymath}
		\norm{u_{\lambda,i}}_\gamma \to 0 ~\text{as}~\lambda\to \lambda^*
		\end{displaymath}
		for any $i=1,\ldots, m$. Furthermore, $u_{\lambda,i}\in L^\infty (\Omega)$ for each $i=1,\ldots,m$.
\end{thm}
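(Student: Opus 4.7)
The plan is to work in the natural Sobolev-type space $H^1_{0,\gamma}(\Omega)$ associated with the Grushin gradient and look at the $C^1$, even energy functional
\begin{displaymath}
I_\lambda(u) = \frac{1}{2}\int_\Omega |\nabla_\gamma u|^2\,dz - \frac{\lambda}{2}\int_\Omega u^2\,dz - \frac{1}{2^*_\gamma}\int_\Omega |u|^{2^*_\gamma}\,dz,
\end{displaymath}
whose critical points are precisely the weak solutions of \eqref{problem:grushin}. Since $I_\lambda$ is even with $I_\lambda(0)=0$ and has the right mountain-pass geometry (using the assumption $\lambda<\lambda^*\le\lambda_k$ to keep the quadratic part bounded below on the appropriate subspace), I would set up a symmetric min-max / linking scheme in the spirit of Cerami--Fortunato--Struwe and Fiscella--Molica Bisci--Servadei. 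Concretely, I would fix the $m$-dimensional eigenspace $E_{\lambda^*}$ corresponding to $\lambda^*=\lambda_k$ and use the $\mathbb{Z}_2$-equivariant topology of its unit sphere (via Krasnoselski genus) to define $m$ candidate critical levels $c_1\le c_2\le\dots\le c_m$.

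The central obstruction, as always in critical problems, is the loss of compactness: $I_\lambda$ only satisfies a \emph{local} Palais--Smale condition. I would prove $(\mathrm{PS})_c$ for every $c$ in the interval
\begin{displaymath}
0 < c < c^\star := \frac{1}{N_\gamma}\, S_\gamma^{N_\gamma/2},
\end{displaymath}
by running the standard concentration--compactness argument adapted to the Grushin setting: a bounded PS sequence converges weakly, the Brezis--Lieb splitting applies to the critical nonlinearity (which in this case is measured with respect to the homogeneous dimension $N_\gamma$), and strong $L^2$ convergence on bounded domains rules out vanishing except at the trivial weak limit, while the defect of mass is ruled out precisely when $c<c^\star$. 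This is the step I expect to be the most technically delicate, since one must carefully handle the degeneracy of $\Delta_\gamma$ on $\{0\}\times\mathbb{R}^\ell$.

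Granted the local PS condition, the crux is to show $c_m < c^\star$, so that all $m$ min-max values are recovered as honest critical values. For any $v\in E_{\lambda^*}\setminus\{0\}$ the Rayleigh identity $\|v\|_\gamma^2=\lambda^*\|v\|_2^2$ yields
\begin{displaymath}
\max_{t>0} I_\lambda(tv) \;=\; \frac{1}{N_\gamma}\,\frac{\bigl((\lambda^*-\lambda)\|v\|_2^2\bigr)^{N_\gamma/2}}{\|v\|_{2^*_\gamma}^{N_\gamma}},
\end{displaymath}
and Hölder's inequality together with the bound on $\lambda^*-\lambda$ in hypothesis \eqref{eq:interval} forces the right-hand side to lie strictly below $c^\star=S_\gamma^{N_\gamma/2}/N_\gamma$ uniformly on the unit sphere of the $m$-dimensional space $E_{\lambda^*}$. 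Coupling this with the PS condition, an equivariant deformation lemma (or the symmetric Ljusternik--Schnirelman theorem of Clark/Rabinowitz type) produces at least $m$ pairs $(-u_{\lambda,i},u_{\lambda,i})$ of nontrivial critical points.

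Finally, the same max-energy estimate above forces $I_\lambda(u_{\lambda,i})\to 0$ as $\lambda\to\lambda^*$; evaluating the Euler--Lagrange identity at $u_{\lambda,i}$ turns this into $\|u_{\lambda,i}\|_\gamma\to 0$, giving the bifurcation statement. The $L^\infty$ regularity is then a separate, standard Moser/Brezis--Kato bootstrap: starting from $u_{\lambda,i}\in L^{2^*_\gamma}$, one iterates on the equation $-\Delta_\gamma u = (\lambda + |u|^{2^*_\gamma-2})u$ using the Sobolev embedding associated with the homogeneous dimension $N_\gamma$ to gain integrability at each step, and concludes $u_{\lambda,i}\in L^\infty(\Omega)$.
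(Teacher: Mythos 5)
Your overall strategy coincides with the paper's: the same energy functional, the same local Palais--Smale threshold $\frac{1}{N_\gamma}S_\gamma^{N_\gamma/2}$ obtained through the Brezis--Lieb splitting (this is Lemma \ref{lem:4.2}), the same Rayleigh-quotient-plus-H\"{o}lder estimate to push the relevant min-max levels below that threshold, an abstract symmetric critical point theorem to produce the $m$ pairs, and the observation that the upper bound $\frac{1}{N_\gamma}(\lambda^*-\lambda)^{N_\gamma/2}\abs{\Omega}$ on the critical values tends to $0$ as $\lambda\to\lambda^*$ to obtain the bifurcation. The only genuine divergence at the end is the $L^\infty$ bound: you run a Brezis--Kato/Moser iteration adapted to $N_\gamma$, whereas the paper simply invokes a regularity theorem of Loiudice with the potential $\lambda+\abs{u}^{2^*_\gamma-2}\in L^{N_\gamma/2}(\Omega)$; both routes are legitimate, yours being self-contained at the cost of redoing a known bootstrap.

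There is, however, one concrete gap in your min-max setup. You propose to generate the $m$ critical levels from the $\mathbb{Z}_2$-topology of the unit sphere of the $m$-dimensional eigenspace $E_{\lambda^*}$ alone. But the lower bound $I_\lambda\ge\delta$ on a small sphere is only available on $V=\{u\in\mathscr{H}_\gamma:\braket{u,e_j}_\gamma=0,\ j=1,\dots,k-1\}$, which has codimension $k-1$, and in the pseudo-index counting that any such linking scheme relies on (Theorem \ref{thm:abstract}), a compact symmetric set built from a $d$-dimensional subspace links the sphere in $V$ with multiplicity at most $d-(k-1)$. With $d=m$ this yields only $m-k+1$ pairs, which falls short of $m$ (and can even be nonpositive) whenever $k\ge2$. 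The fix is exactly what the paper does in Lemma \ref{lem:4.3}: take $W=\operatorname{span}\{e_1,\dots,e_{k+m-1}\}$, so that $\dim W-\operatorname{codim}V=m$. Your ray-maximum identity must then be upgraded to an inequality valid on all of $W$, which is immediate since $\norm{u}_\gamma^2\le\lambda^*\norm{u}_2^2$ for every $u\in W$ (all eigenvalues involved are $\le\lambda_k$); H\"{o}lder's inequality then gives $\sup_{W}I_\lambda\le\frac{1}{N_\gamma}(\lambda^*-\lambda)^{N_\gamma/2}\abs{\Omega}$, and hypothesis \eqref{eq:interval} places this below the compactness threshold. With this correction your argument becomes the paper's proof.
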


While the Brezis-Nirenberg problem for the basic Laplace operator has been extensively studied over the past decades, its couterpart for the Grushin operator is somehow a recent topic in the literature. The best references which address it directly seem to be \cite{loiudice} and the very recent~\cite{alves2023}. Loiudice investigated in \cite{loiudice} the asymptotic behavior at infinity of positive solutions to the equation
\begin{displaymath}
	-\Delta_\gamma u = u^{2_\gamma^*-1}.
\end{displaymath}
In \cite{alves2023} the authors establish an existence result for the degenerate elliptic problem
\begin{displaymath}
\left\lbrace
	\begin{array}{ll}
		-\Delta_\gamma v = \lambda \vert v \vert^{q-2} v + \vert v \vert^{2_\gamma^*-2} v &\hbox{in $\Omega$} \\
		v =0 &\hbox{on $\partial \Omega$},
	\end{array}
	\right.
\end{displaymath}
where $\lambda>0$, $2 \leq q < 2_\gamma^*$. The domain $\Omega$ is required to satisfy the additional condition $\Omega \cap \lbrace x=0 \rbrace \neq \emptyset$. Our results do not overlap with theirs, even in the case $q=2$. The entire problem
\begin{equation}
\label{eq:entire_problem}
    \begin{cases}
        -\Delta_\gamma u = u^{2_\gamma^*-1} &\hbox{in $\mathbb{R}^N$}\\
        u>0
    \end{cases}
\end{equation}
has been studied in \cite{montimorbidelli,monti}, where uniqueness and symmetry properties of solutions have been proved. Of course \eqref{eq:entire_problem} is the critical point equation satisfied by the extremals of a Sobolev-type inequality for the Grushin operator.

\section{A functional setting for the Grushin operator} \label{sec:preliminaries}

For a bounded open set $\Omega\subset \mathbb{R}^N,$ we denote by 
$\mathscr{H}_\gamma $
the completion of $C^1_0(\Omega)$ with respect to the norm 
\begin{displaymath}
\norm{u}_\gamma = \left(\int_{\Omega}^{} \abs{\nabla_\gamma u} ^2 \, dz \right)^{1/2}.
\end{displaymath}
It turns out that $\mathscr{H}_\gamma$ is a Hilbert space with the inner product\footnote{For the sake of simplicity, we omit any scalar multiplication sign between the two vectors $\nabla_\gamma u$ and $\nabla_\gamma v$.}
\begin{displaymath}
\braket{u,v}_\gamma = \int_{\Omega}^{} \nabla_\gamma u \nabla_\gamma v\, \, dz.
\end{displaymath}
It is known (see \cite{alves2023berestycki}) that the continuous embedding
$\mathscr{H}_\gamma\hookrightarrow L^{2^*_\gamma} (\Omega)$ holds.
Moreover, if $\Omega$ is a bounded domain of $\mathbb{R}^N,$ then the embedding $\mathscr{H}_\gamma \hookrightarrow L^{p} (\Omega)$ is compact for every $p\in [1,2^*_\gamma),$ see \cite[Proposition 3.2]{kogojlanconelli}.

Let
\begin{equation}
	\label{def:S}
	S_\gamma = \inf\left\{ {\frac{\norm{u}_\gamma^2}{\norm{u}_{2^*_\gamma}^2} \mid u\in \mathscr{H}_\gamma\setminus\{0\}}\right\}
\end{equation}
denote the best constant for the embedding $\mathscr{H}_\gamma\hookrightarrow L^{2^*_\gamma}(\Omega)$.
A weak solution of problem (\ref{problem:grushin}) is by definition any function $u \in \mathscr{H}_\gamma$ such that
\begin{equation*}
	\int_{\Omega}^{}(\nabla_\gamma u \nabla_\gamma \varphi -\lambda u\varphi) \, dz =\int_{\Omega}^{} \abs{u}^{2^*_\gamma -2} u \varphi \, dz 
\end{equation*}
for all $\varphi \in \mathscr{H}_\gamma$.

The functional $I_\gamma \colon \mathscr{H}_\gamma \to \mathbb{R}$ defined by
\begin{displaymath}
I_\gamma(u) = \frac{1}{2} \int_{\Omega}^{}(\abs{\nabla_\gamma u}^2 - \lambda \abs{u}^2) \, dz -\frac{1}{2^*_\gamma}\int_{\Omega}^{} \abs{u}^{2^*_\gamma} \, dz
\end{displaymath}
is easily seen to be of class $C^1(\mathscr{H}_\gamma)$ with Fr\'{e}chet derivative
\begin{displaymath}
	DF(u) \colon v \mapsto \int_\Omega \left( \nabla_\gamma u \nabla_\gamma v - \lambda uv \right) \, dz - \int_\Omega \vert u \vert^{2_\gamma^* -2} u v \, dz,
\end{displaymath}

so that weak solutions to \eqref{problem:grushin} correspond to critical points of $I_\gamma$.

\section{Eigenvalues of the Grushin operator}
\label{sec:eigenvalues}

Since the proof of Theorem \ref{thm:main} is strictly related to the spectral theory associated to the operator $-\Delta_\gamma$, we collect in this section the basic spectral tools obtained  in \cite{xu2023nontrivial} for the Dirichlet eigenvalue problem
\begin{equation}
	\label{problem:eigenvalues}
	\begin{cases}
		-\Delta_\gamma u  = \lambda u ~&\text{in}~ \Omega\\
		u = 0  ~&\text{on}~ \partial\Omega.
	\end{cases}
\end{equation}
\begin{thm}
	\label{thm:autovalori}
	Let $\Omega$ be a bounded set of $\mathbb{R}^N$. Then the eigenvalues and eigenfunctions of $-\Delta_\gamma$ have the following properties:
	\begin{enumerate}[label=\arabic*.]
		\item The first eigenvalue $\lambda_1$ is given by
		\begin{displaymath}
		\lambda_1 = \min_{\substack{\norm{u}_2=1 \\ u\in \mathscr{H}_\gamma}} \int_{\Omega}^{} \abs{\nabla_\gamma u}^2 \, dz.
		\end{displaymath} 
		\item There exists a positive function $e_1\in \mathscr{H}_\gamma,$ which is an eigenfunction corresponding to $\lambda_1$ such that
		\begin{displaymath}
		\norm{e_1}_2 = 1, \quad\lambda_1 = \int_{\Omega}^{} \abs{\nabla_\gamma e_1}^2 \, dz.
		\end{displaymath}
		\item The first eigenvalue $\lambda_1$ is simple.
		\item The set of eigenvalues of (\ref{problem:eigenvalues}) consists of a sequence $\{\lambda_k\}_{k\in\mathbb{N}}$ such that
		\begin{displaymath}
		0<\lambda_1 < \lambda_2 \le \dots \le \lambda_k\le \lambda_{k+1} \le \dots
		\end{displaymath}
		and $\lambda_k \to+\infty$ as $k\to+\infty$.

		Moreover, for any $k\in\mathbb{N},$ the eigenvalues can be characterized as follows:
		\begin{displaymath}
		\lambda_{k+1} = \min_{\substack{\norm{u}_2 = 1 \\ u\in \mathbb{P}_{k+1}}} \int_{\Omega}^{} \abs{\nabla_\gamma u}^2 \, dz
		\end{displaymath} 
		where
		\begin{displaymath}
		\mathbb{P}_{k+1} = \left\{ u\in \mathscr{H}_\gamma \mid \braket{u,e_j} =0 \;\text{for } j=1,\dots, k \right\}.
		\end{displaymath}
		\item For any $k\in\mathbb{N},$ there exists a function $e_{k+1}\in\mathbb{P}_{k+1},$ which is an eigenfunction corresponding to $\lambda_{k+1},$ such that
		\begin{displaymath}
		\norm{e_{k+1}}_2 = 1, ~ \lambda_{k+1} = \int_{\Omega}^{} \abs{\nabla_\gamma e_{k+1}}^2 \, dz.
		\end{displaymath}
		\item The sequence $\{e_k\}_k$ of eigenfunctions associated to $\lambda_{k}$ is an orthonormal basis of $L^2(\Omega)$ and an orthogonal basis of $\mathscr{H}_\gamma.$
		\item Each eigenvalue $\lambda_{k}$ has finite multiplicity. More precisely, if $\lambda_k$ satisfies
		\begin{equation*}
			\lambda_{k-1}<\lambda_k = \dots = \lambda_{k+m} < \lambda_{k+m+1}
		\end{equation*}
		for some $m\in\mathbb{N}_0,$ then $\operatorname{span}\{e_k,\dots, e_{k+m}\}$ is the set of all the eigenfunctions corresponding to $\lambda_k$.
	\end{enumerate}
\end{thm}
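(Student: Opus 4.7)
The strategy is the standard reduction to the spectral theory of compact self-adjoint operators on a Hilbert space, adapted to the Grushin setting. The key analytic input from Section~\ref{sec:preliminaries} is the compact embedding $\mathscr{H}_\gamma \hookrightarrow L^2(\Omega)$ for bounded $\Omega$. By the Riesz representation theorem, for every $f \in L^2(\Omega)$ there is a unique $u = Tf \in \mathscr{H}_\gamma$ with $\braket{u,v}_\gamma = \int_\Omega f v \, dz$ for all $v \in \mathscr{H}_\gamma$; this defines a bounded linear map $T \colon L^2(\Omega) \to \mathscr{H}_\gamma$. Composing with the compact inclusion $\mathscr{H}_\gamma \hookrightarrow L^2(\Omega)$, $T$ becomes a compact operator on $L^2(\Omega)$, and it is readily checked to be symmetric, positive and injective (injectivity follows from the Poincaré-type inequality built into $\mathscr{H}_\gamma$).

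Applying the Hilbert--Schmidt spectral theorem to $T$, I obtain a sequence of positive eigenvalues $\mu_k \searrow 0$ and eigenfunctions $\{e_k\}$ forming an orthonormal basis of $L^2(\Omega)$. Setting $\lambda_k = 1/\mu_k$ yields the spectrum of $-\Delta_\gamma$: divergence $\lambda_k \to +\infty$, finite multiplicity of each $\lambda_k$ (item~7 of Theorem~\ref{thm:autovalori}), and the $L^2$-orthonormal basis property (item~6). Orthogonality in $\mathscr{H}_\gamma$ follows from $\braket{e_j, e_k}_\gamma = \lambda_j \int_\Omega e_j e_k \, dz = 0$ for $j \neq k$.

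The variational characterizations (items~1, 4, 5) are obtained by direct minimization of the Rayleigh quotient $u \mapsto \norm{u}_\gamma^2 / \norm{u}_2^2$, first on $\mathscr{H}_\gamma \setminus \{0\}$ and then on $\mathbb{P}_{k+1} \setminus \{0\}$. A minimizing sequence is bounded in $\mathscr{H}_\gamma$, so a subsequence converges weakly in $\mathscr{H}_\gamma$ and strongly in $L^2(\Omega)$; weak lower semicontinuity of $\norm{\cdot}_\gamma$ produces a minimizer, and its Euler--Lagrange equation identifies it as an eigenfunction. Weak closedness of $\mathbb{P}_{k+1}$ is automatic since its defining constraints are continuous linear functionals on $\mathscr{H}_\gamma$.

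The most delicate items are~2 and~3. Non-negativity of a chosen $e_1$ follows because $|e_1|$ is still admissible and still a minimizer (the chain rule for Sobolev-type spaces with degenerate weights preserves the Grushin norm). The strict positivity $e_1 > 0$ and the simplicity of $\lambda_1$ require a strong maximum principle or Harnack-type inequality for weak solutions of $-\Delta_\gamma u = \lambda_1 u \geq 0$. Such a statement is the main obstacle: the degeneracy of $\Delta_\gamma$ on $\{x=0\} \cap \Omega$ rules out the classical Hopf lemma and forces an appeal to a subelliptic Harnack inequality (available through the Franchi--Lanconelli theory, and used explicitly in \cite{xu2023nontrivial}). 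Granted this, simplicity is standard: two linearly independent eigenfunctions associated to $\lambda_1$ would yield two positive eigenfunctions after replacement by $|\cdot|$, and these cannot be $L^2$-orthogonal, forcing the first eigenspace to be one-dimensional.
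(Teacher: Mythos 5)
The paper itself offers no proof of Theorem \ref{thm:autovalori}: it is imported wholesale from \cite{xu2023nontrivial} (``we collect in this section the basic spectral tools obtained in \cite{xu2023nontrivial}''), so there is no in-paper argument to compare against. Your outline is the standard reduction and, for items 1, 4, 5, 6 and 7, it is correct and complete in all essentials: the compact embedding $\mathscr{H}_\gamma \hookrightarrow L^2(\Omega)$ from \cite[Proposition 3.2]{kogojlanconelli} (valid since $2 < 2^*_\gamma$) makes the resolvent $T$ compact, symmetric and positive, and the Hilbert--Schmidt theorem plus direct minimization of the Rayleigh quotient on the nested subspaces $\mathbb{P}_{k+1}$ delivers everything except the qualitative statements about $e_1$. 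One cosmetic remark: injectivity of $T$ on $L^2(\Omega)$ follows from density of $C_0^1(\Omega)$ in $L^2(\Omega)$, not from a Poincar\'e inequality, but the conclusion is the same.

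The only genuine gaps are in items 2 and 3, and you have correctly located them without closing them. First, the strong maximum principle (or Harnack inequality) for nonnegative weak solutions of $-\Delta_\gamma u = \lambda_1 u$ is the entire content of the positivity claim; since $\Delta_\gamma$ degenerates on $\{x=0\}\cap\Omega$, this is not a routine citation and your proof is incomplete until a precise subelliptic Harnack inequality is quoted with hypotheses verified (note also that $\Omega$ must be connected for simplicity to hold, a hypothesis the theorem as stated does not make explicit). Second, your simplicity argument has a logical slip: the non-orthogonality of $|u_1|$ and $|u_2|$ says nothing about $u_1$ and $u_2$ themselves. The correct chain is: $|u_i|$ is again a minimizer (since $\lvert\nabla_\gamma |u_i|\rvert = \lvert\nabla_\gamma u_i\rvert$ a.e.), hence an eigenfunction, hence strictly positive by the maximum principle; therefore $u_i$ never vanishes and has constant sign on the connected set $\Omega$; therefore $\int_\Omega u_1 u_2 \, dz \neq 0$, contradicting the orthogonality one may always arrange within an eigenspace. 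With that repair, and with the Harnack input supplied, your proof is the standard one and presumably parallels the argument in \cite{xu2023nontrivial}.
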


\section{Proof of Theorem \ref{thm:main}}

We recall from \cite{bartolobencifortunato} the following abstract multiplicity result for critical points of an even functional.

\begin{thm}
	\label{thm:abstract} Let $H$ be a real Hilbert space with norm $\norm{\cdot}$ and suppose that $I\in C^1(H,\mathbb{R})$ is a functional on $H$ satisfying the following conditions:
	\begin{enumerate}[label=\arabic*.]
		\item $I(u) = I(-u)$ and $I(u) =0$;
		\item there exists a constant $\beta>0$ such that the Palais-Smale condition for $I$ holds in $(0,\beta);$
		\item there exist two closed subspaces $V$ and $W$ of $H$ and three positive constants $\rho$, $\delta$, $\beta'$ with $\delta <\beta'<\beta$, such that
		\begin{enumerate}[label=\alph*.]
			\item $I(u) \le \beta'$ for any $u\in W;$
			\item $I(u) \ge \delta$ for any $u\in V$ with $\norm{u} = \rho;$
			\item $\operatorname{codim}V<+\infty$ and $\dim W\ge \operatorname{codim}V$.
		\end{enumerate}
	\end{enumerate}
	Then there exists at least $\dim W - \operatorname{codim}V$ pairs of critical points of $I,$ with critical values belonging to the interval $[\delta,\beta']$.
\end{thm}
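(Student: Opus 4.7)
The plan is to apply the abstract Theorem~\ref{thm:abstract} to the even $C^1$ functional $I_\gamma$ on $H=\mathscr{H}_\gamma$; since $I_\gamma(u)=I_\gamma(-u)$ and $I_\gamma(0)=0$, hypothesis~1 is immediate. Using the spectral decomposition of Theorem~\ref{thm:autovalori}, write $\lambda^{*}=\lambda_k=\dots=\lambda_{k+m-1}$ with $\lambda_{k-1}<\lambda^{*}<\lambda_{k+m}$, and choose
\[
  W=\operatorname{span}\{e_1,\dots,e_{k+m-1}\},\qquad V=\operatorname{span}\{e_1,\dots,e_{k-1}\}^{\perp},
\]
the orthogonal complement being taken in $\mathscr{H}_\gamma$. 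Then $\dim W=k+m-1$ and $\operatorname{codim} V=k-1$, so $\dim W-\operatorname{codim} V=m$, matching exactly the number of pairs claimed in Theorem~\ref{thm:main}.

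For the geometric part of hypothesis~3, on $V$ I would combine the variational inequality $\|u\|_\gamma^2\ge\lambda^{*}\|u\|_2^2$ with \eqref{def:S} to obtain
\[
  I_\gamma(u)\ge\tfrac{1}{2}\bigl(1-\tfrac{\lambda}{\lambda^{*}}\bigr)\|u\|_\gamma^2-\tfrac{1}{2^*_\gamma\,S_\gamma^{2^*_\gamma/2}}\|u\|_\gamma^{2^*_\gamma}\ge\delta>0
\]
on $\{u\in V:\|u\|_\gamma=\rho\}$, provided $\rho>0$ is chosen small enough (possible since $\lambda<\lambda^{*}$). On the finite-dimensional space $W$ the reverse inequality $\|u\|_\gamma^2\le\lambda^{*}\|u\|_2^2$, H\"older's inequality $\|u\|_2\le|\Omega|^{1/N_\gamma}\|u\|_{2^*_\gamma}$, and elementary optimization in $t=\|u\|_{2^*_\gamma}$ produce an explicit upper bound
\[
  \sup_{u\in W}I_\gamma(u)\le\tfrac{1}{N_\gamma}\bigl[(\lambda^{*}-\lambda)\,|\Omega|^{2/N_\gamma}\bigr]^{N_\gamma/2}=:\beta',
\]
and \eqref{eq:interval} is precisely the hypothesis that forces $\beta'<\tfrac{1}{N_\gamma}S_\gamma^{N_\gamma/2}$.

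The crux of the argument, and the step I expect to be hardest, is hypothesis~2: the Palais-Smale condition on $(0,\beta)$ with $\beta=\tfrac{1}{N_\gamma}S_\gamma^{N_\gamma/2}$. This is the Brezis-Nirenberg threshold transposed to the Grushin framework. Boundedness of a PS sequence $\{u_n\}$ at level $c$ follows from the identity
\[
  I_\gamma(u_n)-\tfrac{1}{2}\,DI_\gamma(u_n)[u_n]=\tfrac{1}{N_\gamma}\|u_n\|_{2^*_\gamma}^{2^*_\gamma},
\]
which controls $\|u_n\|_{2^*_\gamma}$ and, via the equation, also $\|u_n\|_\gamma$. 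A weak limit $u$ is a critical point of $I_\gamma$. Writing $v_n=u_n-u$ and exploiting the compact embedding $\mathscr{H}_\gamma\hookrightarrow L^p(\Omega)$ for $p<2^*_\gamma$ from \cite{kogojlanconelli} together with the Brezis-Lieb lemma for the critical term, I would reduce to proving $v_n\to 0$ strongly. Testing against $u_n$ yields $\|v_n\|_\gamma^2-\|v_n\|_{2^*_\gamma}^{2^*_\gamma}\to 0$, and \eqref{def:S} then forces either $v_n\to 0$ or $\liminf\|v_n\|_\gamma^2\ge S_\gamma^{N_\gamma/2}$; the level decomposition $c=I_\gamma(u)+\tfrac{1}{N_\gamma}\lim\|v_n\|_\gamma^2$, combined with $I_\gamma(u)\ge 0$ for any critical point $u$, rules out the second alternative whenever $c<\beta$.

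Applying Theorem~\ref{thm:abstract} then delivers $m$ pairs $\pm u_{\lambda,i}$ with $I_\gamma(u_{\lambda,i})\in[\delta,\beta']$. Since $\beta'\to 0$ as $\lambda\to\lambda^{*}$, the critical-point identity $I_\gamma(u_{\lambda,i})=\tfrac{1}{N_\gamma}\|u_{\lambda,i}\|_{2^*_\gamma}^{2^*_\gamma}$ forces $\|u_{\lambda,i}\|_{2^*_\gamma}\to 0$, and substituting back into the variational equation gives $\|u_{\lambda,i}\|_\gamma\to 0$, which is the bifurcation statement. For the final regularity claim, I would run a Moser-type iteration adapted to the Grushin operator: using truncations of the form $\min(|u|^{2\beta}u,M)$ as test functions and the embedding $\mathscr{H}_\gamma\hookrightarrow L^{2^*_\gamma}$ as the iterative engine, one bootstraps integrability of $u_{\lambda,i}$ through increasing powers and concludes $u_{\lambda,i}\in L^\infty(\Omega)$.
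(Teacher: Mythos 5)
Your proposal does not prove the stated theorem; it \emph{applies} it. Theorem~\ref{thm:abstract} is the abstract critical-point theorem of Bartolo, Benci and Fortunato, and what you have written is a verification that the specific functional $I_\gamma$ on $\mathscr{H}_\gamma$ satisfies its hypotheses (the choice of $V$ and $W$ from the spectral decomposition, the estimates giving $\delta$ and $\beta'$, the Palais--Smale condition below $\tfrac{1}{N_\gamma}S_\gamma^{N_\gamma/2}$), followed by the bifurcation and $L^\infty$ conclusions. That is the content of Lemmas~\ref{lem:4.2} and~\ref{lem:4.3} and of the concluding argument for Theorem~\ref{thm:main} --- not a proof of Theorem~\ref{thm:abstract}, which is stated for an arbitrary real Hilbert space $H$ and an arbitrary even functional $I$ and makes no reference to $\Omega$, $S_\gamma$ or eigenvalues. (For the record, the paper does not prove Theorem~\ref{thm:abstract} either: it is quoted from \cite{bartolobencifortunato}.)

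A genuine proof of the abstract statement requires machinery that is entirely absent from your write-up: one introduces the Krasnoselskii genus and, more precisely, a pseudo-index relative to the sphere $\lbrace u\in V : \norm{u}=\rho\rbrace$, defines minimax levels $c_i$ over the classes of symmetric sets of pseudo-index at least $i$, uses an intersection (linking) lemma together with $\dim W\ge\operatorname{codim}V$ to show that $\delta\le c_i\le\beta'$ for $i=1,\dots,\dim W-\operatorname{codim}V$, and finally invokes an equivariant deformation lemma --- legitimate only because the Palais--Smale condition is assumed on $(0,\beta)\supset[\delta,\beta']$ --- to conclude that each $c_i$ is a critical value and that repeated levels carry infinitely many critical points. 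None of these steps can be extracted from your argument. As a secondary remark: read as a proof of Theorem~\ref{thm:main}, your outline does essentially coincide with the paper's (same $V$, $W$, same threshold $\beta$, same Brezis--Lieb splitting of the Palais--Smale sequence), the only real divergence being the final regularity claim, where the paper cites \cite{loiudice} instead of performing a Moser iteration.
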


\bigskip

We are now ready to begin the proof of Theorem \ref{thm:main}, which we split in several steps.

\medskip

\begin{lemma}
\label{lem:4.2}
For any value
\begin{displaymath}
c< \frac{1}{N_\gamma} S_\gamma^{N_\gamma/2}
\end{displaymath} 
the functional $I_\gamma$ satisfies the Palais-Smale condition at level $c\in\mathbb{R}$.\footnote{This means that every sequence $\{u_j\}_{j\in\mathbb{N}}$ in $\mathscr{H}_\gamma$ such that $I_\gamma(u_j) \to c$ and $I_\gamma'(u_j) \to 0$ as $j \to +\infty$ admits a subsequence which converges strongly in $\mathscr{H}_\gamma$.}
\end{lemma}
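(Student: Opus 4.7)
\medskip

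My plan is to follow the now-classical concentration–compactness argument that underlies the Brezis–Nirenberg paper, adapted to the Grushin setting using the compact embedding $\mathscr{H}_\gamma \hookrightarrow L^p(\Omega)$ for $p<2_\gamma^*$ and the Sobolev inequality encoded by $S_\gamma$. Let $\{u_j\}\subset \mathscr{H}_\gamma$ satisfy $I_\gamma(u_j)\to c$ and $I_\gamma'(u_j)\to 0$. First, I would establish boundedness of $\{u_j\}$ in $\mathscr{H}_\gamma$ via the standard trick of considering
\[
I_\gamma(u_j) - \tfrac{1}{2_\gamma^*}\langle I_\gamma'(u_j), u_j\rangle = \tfrac{1}{N_\gamma}\bigl(\|u_j\|_\gamma^2 - \lambda \|u_j\|_2^2\bigr),
\]
and handling the $\lambda\|u_j\|_2^2$ term by the compact embedding $\mathscr{H}_\gamma \hookrightarrow L^2(\Omega)$ (after first absorbing it using an interpolation or directly using boundedness of $\|u_j\|_2$ which follows once one notices that otherwise a rescaling argument fails). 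Alternatively one may argue by contradiction, dividing by $\|u_j\|_\gamma$.

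Once boundedness is in hand, I would extract (up to a subsequence) $u_j \rightharpoonup u$ weakly in $\mathscr{H}_\gamma$, strongly in $L^p(\Omega)$ for every $p\in[1,2_\gamma^*)$, and a.e.\ in $\Omega$. Passing to the limit in $I_\gamma'(u_j)\to 0$ shows that $u$ is a weak solution of \eqref{problem:grushin}, so in particular $\langle I_\gamma'(u),u\rangle = 0$, which yields
\[
I_\gamma(u) = \tfrac{1}{N_\gamma}\int_\Omega |u|^{2_\gamma^*}\,dz \ge 0.
\]
Setting $v_j = u_j - u$, the Brezis–Lieb lemma applied to both $\int|\nabla_\gamma \cdot|^2$ and $\int|\cdot|^{2_\gamma^*}$ (the former follows from weak convergence, the latter from a.e.\ convergence and the $L^{2_\gamma^*}$ boundedness) gives
\[
\|u_j\|_\gamma^2 = \|u\|_\gamma^2 + \|v_j\|_\gamma^2 + o(1),\qquad \int_\Omega|u_j|^{2_\gamma^*} = \int_\Omega|u|^{2_\gamma^*} + \int_\Omega|v_j|^{2_\gamma^*} + o(1),
\]
while the lower order term $\lambda\int|u_j|^2$ converges to $\lambda\int|u|^2$ by compactness.

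Combining these decompositions with $I_\gamma(u_j)\to c$ and $\langle I_\gamma'(u_j), u_j\rangle \to 0$, I obtain
\[
\tfrac{1}{2}\|v_j\|_\gamma^2 - \tfrac{1}{2_\gamma^*}\int_\Omega|v_j|^{2_\gamma^*}\,dz = c - I_\gamma(u) + o(1),\qquad \|v_j\|_\gamma^2 - \int_\Omega |v_j|^{2_\gamma^*}\,dz = o(1).
\]
Up to a subsequence, both $\|v_j\|_\gamma^2$ and $\int|v_j|^{2_\gamma^*}$ converge to a common limit $\ell\ge 0$, and the definition \eqref{def:S} of $S_\gamma$ forces $\ell \ge S_\gamma\,\ell^{2/2_\gamma^*}$, so either $\ell = 0$ or $\ell \ge S_\gamma^{N_\gamma/2}$. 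Subtracting the two displayed identities yields $c - I_\gamma(u) = \tfrac{1}{N_\gamma}\ell$, and since $I_\gamma(u)\ge 0$ the assumption $c < \tfrac{1}{N_\gamma} S_\gamma^{N_\gamma/2}$ rules out the second alternative. Hence $\ell = 0$, i.e.\ $v_j\to 0$ in $\mathscr{H}_\gamma$, proving strong convergence.

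The main obstacle I expect is the Brezis–Lieb step for $\int|\nabla_\gamma u_j|^2$: because the weight $|x|^{2\gamma}$ degenerates on $\{0\}\times\mathbb{R}^\ell$, one must be a bit careful to justify that weak convergence in $\mathscr{H}_\gamma$ (with the weighted norm) still produces the Hilbertian splitting $\|u_j\|_\gamma^2 = \|u\|_\gamma^2 + \|u_j - u\|_\gamma^2 + o(1)$. This, however, is automatic from the inner product structure of $\mathscr{H}_\gamma$ once weak convergence is granted, so it is more of a bookkeeping issue than a genuine obstruction; the substantive content of the lemma lies in the energy threshold $\tfrac{1}{N_\gamma}S_\gamma^{N_\gamma/2}$.
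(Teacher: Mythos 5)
From the extraction of the weak limit onward your argument is correct and is, in substance, the same as the paper's: the limit is a weak solution, hence $I_\gamma(u)=\tfrac{1}{N_\gamma}\int_\Omega|u|^{2_\gamma^*}\ge 0$; the Hilbert-space orthogonality plus Brezis--Lieb splittings, the identity $\|v_j\|_\gamma^2-\|v_j\|_{2_\gamma^*}^{2_\gamma^*}=o(1)$, and the dichotomy ``$\ell=0$ or $\ell\ge S_\gamma^{N_\gamma/2}$'' excluded by the level restriction. The paper packages the last step slightly differently (it derives $\|v_j\|_\gamma^2\le N_\gamma I_\gamma(u_j)+o(1)<S_\gamma^{N_\gamma/2}$ and then the inequality $\|v_j\|_\gamma^2\bigl(S_\gamma^{2_\gamma^*/2}-\|v_j\|_\gamma^{2_\gamma^*-2}\bigr)\le o(1)$), but this is the same dichotomy, and your observation that the gradient splitting is pure Hilbert-space bookkeeping, unaffected by the degeneracy of the weight, is right.

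The genuine gap is in the boundedness step. The combination you chose,
\[
I_\gamma(u_j)-\tfrac{1}{2_\gamma^*}\langle I_\gamma'(u_j),u_j\rangle=\tfrac{1}{N_\gamma}\bigl(\|u_j\|_\gamma^2-\lambda\|u_j\|_2^2\bigr),
\]
only yields $\|u_j\|_\gamma^2-\lambda\|u_j\|_2^2\le C(1+\|u_j\|_\gamma)$, and since $\|u_j\|_2^2$ can be as large as $\lambda_1^{-1}\|u_j\|_\gamma^2$, this is vacuous once $\lambda\ge\lambda_1$ --- which is precisely the regime of Theorem \ref{thm:main} (as noted in the proof of Lemma \ref{lem:4.3}, condition \eqref{eq:interval} entails $\lambda\ge\lambda_1$). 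None of your proposed patches closes this: the compact embedding $\mathscr{H}_\gamma\hookrightarrow L^2(\Omega)$ cannot be used to control $\|u_j\|_2$ \emph{before} boundedness in $\mathscr{H}_\gamma$ is known, and ``interpolation'' or ``otherwise a rescaling argument fails'' are not arguments. The fix the paper uses is to take the $\tfrac12$-combination instead: $I_\gamma(u_j)-\tfrac12\langle I_\gamma'(u_j),u_j\rangle=\tfrac{1}{N_\gamma}\|u_j\|_{2_\gamma^*}^{2_\gamma^*}$ kills \emph{both} quadratic terms and gives the sublinear bound $\|u_j\|_{2_\gamma^*}^{2_\gamma^*}\le\hat k(1+\|u_j\|_\gamma)$; H\"older then gives $\|u_j\|_2^2\le\tilde k(1+\|u_j\|_\gamma)^{2/2_\gamma^*}$, and substituting both bounds back into $I_\gamma(u_j)\le k$ yields $\tfrac12\|u_j\|_\gamma^2\le\bar k(1+\|u_j\|_\gamma)$, hence boundedness. (Your contradiction/normalization variant can be rescued, but only after this same sublinear estimate is established.)
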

%
\begin{proof}
Let $\{u_j\}_{j\in\mathbb{N}}$ be a sequence in $\mathscr{H}_\gamma$ which satisfies the Palais-Smale conditions.
First of all we claim that 
\begin{equation}
	\label{claim:1}
	\text{the sequence $\{u_j\}$ is bounded in $\mathscr{H}_\gamma.$}
\end{equation}
Indeed, for any $j\in\mathbb{N}$ there exists $k>0$ such that 
\begin{equation}
	\label{eq:1}
	\abs{I_\gamma(u_j)} \le k
\end{equation}
and
\begin{equation*}
	\left\vert
		\left\langle I'_\gamma (u_j), \tfrac{u_j}{\Vert u_j \Vert_\gamma} \right\rangle
	\right\vert \leq k
\end{equation*}
and so
\begin{equation}
	\label{eq:2}
	I_\gamma(u_j) - \frac{1}{2} \braket{I'_\gamma(u_j) , u_j} \le k(1+\norm{u_j}_\gamma).
\end{equation}
Furthermore,
\begin{displaymath}
I_\gamma(u_j) - \frac{1}{2} \braket{I'_\gamma(u_j) , u_j} = \left(\frac{1}{2}- \frac{1}{2^*_\gamma}\right) \norm{u_j}_{2_\gamma*}^{2_\gamma^*} = \frac{1}{N_\gamma} \norm{u_j}_{2_\gamma^*}^{2_\gamma^*} 
\end{displaymath}
so, thanks to (\ref{eq:2}), we get that for any $j\in\mathbb{N}$
\begin{equation}
	\label{eq:3}
	\norm{u_j}_{2_\gamma^*}^{2_\gamma^*}  \le \hat{k} (1+\norm{u_j}_\gamma)
\end{equation}
for a suitabile positive constant $\hat{k}.$ 
By H\"{o}lder's inequality, we get
\begin{displaymath}
\norm{u_j}_{2}^2 \le \abs{\Omega}^{{2}/{N_\gamma}} \norm{u_j}_{2_\gamma^*}^2 \le \hat{k}^{{2}/{2^*_\gamma}}\abs{\Omega}^{{2}/{N_\gamma}} (1+\norm{u_j}_\gamma)^{2/2^*_\gamma}
\end{displaymath}  
that is,
\begin{equation}
	\label{eq:4}
	\norm{u_j}_{2}^2 \le \tilde{k} (1+\norm{u_j}_\gamma),
\end{equation}                                                            
for a suitable $\tilde{k}>0$ independent of $j$. By (\ref{eq:1}), (\ref{eq:3}) and (\ref{eq:4}), we have that
\begin{align*} 
 k&\ge I_\gamma (u_j) = \frac{1}{2} \norm{u_j}_\gamma^2 - \frac{\lambda}{2} \norm{u_j}_{2}^2 - \frac{1}{2^*_\gamma}\norm{u_j}_{2^*_\gamma}^{2^*_\gamma} \\
 &\ge \frac{1}{2} \norm{u_j}_\gamma^2 - \overline{k} (1+\norm{u_j}_\gamma)
 \end{align*}
       with $\overline{k}>0$ independent of $j,$ so (\ref{claim:1}) is proved.
       
       Now, let us show that
       \begin{equation}
       	\label{claim:2}
       	\text{problem (\ref{problem:grushin}) admits a weak solution $u_\infty \in \mathscr{H}_\gamma$.}
       \end{equation}     
Since the sequence $\{u_j\}$ is bounded in $\mathscr{H}_\gamma$ and $\mathscr{H}_\gamma$ is a Hilbert space, then, up to a subsequence, still denoted by $\{u_j\},$ there exists $u_\infty\in \mathscr{H}_\gamma$ such that $u_j\rightharpoonup u_\infty$ in $\mathscr{H}_\gamma$, that is
\begin{equation}
	\label{eq:5}
	\int_{\Omega}^{} \nabla_\gamma u_j \nabla_\gamma v \, dz \to \int_{\Omega}^{} \nabla_\gamma u_\infty \nabla_\gamma v \, dz 
	\end{equation}
for any $v\in \mathscr{H}_\gamma$ as $j\to + \infty.$ Moreover, by (\ref{claim:1}), (\ref{eq:3}), the embedding properties of $\mathscr{H}_\gamma$ into the classical Lebesgue spaces and the fact that $L^{2^*_\gamma}(\Omega)$ is a reflexive space we have that, up to a subsequence
\begin{gather}
	\label{eq:6}
	u_j\rightharpoonup u_\infty ~\text{in} ~ L^{2^*_\gamma}(\Omega) \\
	\label{eq:6b}
	u_j\to u_\infty ~\text{in} ~ L^{2}(\Omega)
\end{gather}
and
\begin{equation}
	\label{eq:6c}
	u_j\to u_\infty ~\hbox{a.e in}~\Omega
\end{equation}
       as $j\to+\infty.$
As a consequence of (\ref{claim:1}) and (\ref{eq:3}), we have that $\norm{u_j}_{2^*_\gamma}$ is bounded uniformly in $j,$ hence the sequence $\{\abs{u_j}^{2^*_\gamma-2}u_j\}_j$ is bounded in $L^{{2^*_\gamma}/{2^*_\gamma -1}}(\Omega)$ uniformly in $j.$ Thus, from (\ref{eq:6}) we get
\begin{equation*}
	\abs{u_j}^{2^*_\gamma-2}u_j\rightharpoonup \abs{u_\infty}^{2^*_\gamma-2}u_\infty ~\text{in}~ L^{{2^*_\gamma}/{2^*_\gamma-1}}(\Omega)
\end{equation*}
       as $j\to+\infty,$ and so for all $\varphi\in L^{2^*_\gamma}(\Omega) = \left(L^{{2^*_\gamma}/{2^*_\gamma-1}}(\Omega)\right)^{'}$ we have that 
 \begin{equation*}
 	\int_{\Omega}^{} \abs{u_j}^{2^*_\gamma-2}u_j \varphi \, \, dz \to 
 	\int_{\Omega}^{} \abs{u_\infty}^{2^*_\gamma-2}u_\infty \varphi \, \, dz 
 \end{equation*}
       as $j\to+\infty$. In particular, for all $\varphi \in \mathscr{H}_\gamma$ we have that
 \begin{equation}
 	\label{eq:7}
 	 	\int_{\Omega}^{} \abs{u_j}^{2^*_\gamma-2}u_j \varphi \, \, dz \to 
 	\int_{\Omega}^{} \abs{u_\infty}^{2^*_\gamma-2}u_\infty \varphi \, \,dz
 \end{equation}
       as $j\to+\infty,$ since $\mathscr{H}_\gamma \subseteq L^{2^*_\gamma}(\Omega).$
       Recalling that $\lbrace u_j \rbrace_j$ is a Palais-Smale sequence, for any $\varphi\in \mathscr{H}_\gamma$ we have 
       \begin{displaymath}
       o(1)= \braket{I_\gamma'(u_j),\varphi} = \int_{\Omega}^{}\nabla_\gamma u_j \nabla_\gamma \varphi \, dz - \lambda \int_{\Omega}^{} u_j \varphi \, dz - \int_{\Omega}^{} \abs{u_j}^{2^*_\gamma-2}u_j\varphi \, dz
       \end{displaymath}
       as $j\to+\infty$.
       Letting $j \to +\infty$ and taking into account \eqref{eq:5}, \eqref{eq:6b} and \eqref{eq:7} we find
       \begin{displaymath}
       \int_{\Omega}^{}\nabla_\gamma u_\infty \nabla_\gamma \varphi \, \, dz - \lambda \int_{\Omega}^{} u_\infty \varphi \, \, dz - \int_{\Omega}^{} \abs{u_\infty}^{2^*_\gamma-2}u_\infty\varphi \, \, dz = 0.
       \end{displaymath}
       This shows that $u_\infty$ is a weak solution of problem (\ref{problem:grushin}), and (\ref{claim:2}) is proved.
Now we show that 
\begin{equation}
	\label{claim:3}
	u_j\to u_\infty \,\,\hbox{in}\,\, \mathscr{H}_\gamma.
\end{equation}
 Let $v_j = u_j  - u_\infty$. We claim that
 \begin{equation}
 	\label{claim:4}
 	\norm{v_j}_\gamma^2 = \norm{v_j}_{2^*_\gamma} ^ {2^*_\gamma} + o(1)
 \end{equation}
	Indeed, as a consequence of \eqref{eq:6}, \eqref{eq:6c}, \eqref{eq:7} and the Brezis-Lieb Lemma (see \cite[Theorem 1]{brezislieb}) we get
\begin{multline}
		\label{eq:9}
	\int_{\Omega} \left(\lvert u_j\rvert ^{2^*_\gamma -2} u_j - \lvert u_\infty \rvert ^{2^*_\gamma -2} u_\infty \right) \left(u_j - u_\infty\right) \,dz \\=
	\int_{\Omega} \abs{u_j}^{2^*_\gamma} \,dz - \int_{\Omega} \abs{u_\infty}^{2^*_\gamma} \, dz + o(1) = \int_\Omega \abs{u_j - u_\infty}^{2^*_\gamma} \, dz + o(1) \\=
	\norm{v_j}_{2^*_\gamma}^{2^*_\gamma} + o(1)
\end{multline}
By \eqref{claim:2} and the properties of the functional $I_\gamma$  we know that
\begin{displaymath}
	o(1) = \braket{I'_\gamma(u_j), v_j} = \braket{I'_\gamma (u_j) - I'_\gamma (u_\infty), v_j},
\end{displaymath}
that is
\begin{multline*}
	 o(1) = \int_{\Omega} \left(\lvert \nabla_\gamma u_j\rvert ^2 - 2\nabla_\gamma u_j\nabla_\gamma u_\infty + \lvert \nabla_\gamma u_\infty \rvert^2\right) \,dz - \lambda \int_\Omega \left(u_j^2 -2u_ju_\infty + u_\infty^2\right)\,dz \\
	 -\int_{\Omega} \left(\left(\lvert u_j\rvert ^{2^*_\gamma -2}u_j - \lvert u_\infty \rvert^{2^*_\gamma-2} u_\infty\right) \left(u_j-u_\infty\right)\right)\,dz \\=
	 \norm{v_j}_\gamma^2 -\lambda \norm{v_j}_2^2 -	\int_{\Omega} \left(\lvert u_j\rvert ^{2^*_\gamma -2} u_j - \lvert u_\infty \rvert ^{2^*_\gamma -2} u_\infty \right) \left(u_j - u_\infty\right) \,dz
	\end{multline*}
	Now, by \eqref{eq:6b} and \eqref{eq:9}
		\begin{displaymath}
		\norm{v_j}_\gamma^2 -\lambda \norm{v_j}_2^2 -	\int_{\Omega} \left(\lvert u_j\rvert ^{2^*_\gamma -2} u_j - \lvert u_\infty \rvert ^{2^*_\gamma -2} u_\infty \right) \left(u_j - u_\infty\right) \,dz=
\norm{v_j}_\gamma^2 - \norm{v_j}_{2^*_\gamma}^{2^*_\gamma} + o(1),
	\end{displaymath}
so \eqref{claim:4} is proved. Since $\braket{I'_\gamma (u_j), u_j} \to 0$
we have
\begin{displaymath}
\norm{u_j}_{2^*_\gamma}^{2^*_\gamma} = \int_{\Omega}^{} \left(\abs{\nabla_\gamma (u_j)}^2 - \lambda \abs{u_j}^2\right) \, dz + o(1).
\end{displaymath}
Inserting this equality into the expression for $I_\gamma(u_j)$ we obtain
\begin{multline}
		\label{eq:10}
			I_\gamma(u_j) = \left(\frac{1}{2}- \frac{1}{2^*_\gamma}\right) \int_{\Omega}^{} \left(\abs{\nabla_\gamma u_j}^2 - \lambda \abs{u_j}^2\right) \, dz	+ o(1)  \\
		=\frac{1}{N_\gamma}  \int_{\Omega}^{} \left(\abs{\nabla_\gamma u_\infty}^2- \lambda \abs{u_\infty}^2 \right)\, dz  + \frac{1}{N_\gamma} \int_{\Omega}^{} \abs{\nabla_\gamma v_j}^2 \, dz + o(1)	.
\end{multline}
Moreover, since $u_\infty$ is a weak solution of (\ref{problem:grushin})
\begin{displaymath}
\frac{1}{2} \int_{\Omega}^{} \left(\abs{\nabla_\gamma u_\infty}^2 - \lambda \abs{u_\infty}^2\right) \, dz -\frac{1}{2^*_\gamma}\int_{\Omega}^{}\abs{u_\infty}^{2^*_\gamma} \, dz= 0,
\end{displaymath}
in particular
\begin{equation}
	\label{eq:positive:int}
	\int_{\Omega}^{} \left(\abs{\nabla_\gamma u_\infty}^2 -\lambda \abs{u_\infty}^2\right) \, dz \ge 0.
	\end{equation}
From (\ref{eq:10}) and (\ref{eq:positive:int}) we now infer
\begin{displaymath}
\norm{v_j}_\gamma^2 \le N_\gamma I_\gamma (u_j) + o(1).
\end{displaymath}
Since $\lbrace v_j \rbrace_j$ is a Palais-Smale sequence, for $j$ sufficiently large we obtain
\begin{equation}
	\label{eq:11}
	\norm{v_j}_\gamma^2 \le c_2 < S_\gamma^{N_\gamma/2}.
\end{equation}
Now, by the definition of the constant $S_\gamma$
\begin{equation*}
	\begin{split}
		\norm{v_j}_{2^*_\gamma}^2 \le \frac{\norm{v_j}_\gamma^2}{S_\gamma}
	\end{split}
\end{equation*}
that is
\begin{equation*}
	\begin{split}
		\norm{v_j}_{2^*_\gamma}^{2^*_\gamma} \le \frac{\norm{v_j}_\gamma^{2^*_\gamma}}{S_\gamma^{{N_\gamma}/{N_\gamma -2}}} = \norm{v_j}_\gamma^{2^*_\gamma} S_\gamma^{{2^*_\gamma}/{2}}
	\end{split}
\end{equation*}
and by (\ref{claim:4})
\begin{displaymath}
\norm{v_j}_\gamma^2 \le \norm{v_j}_\gamma^{2^*_\gamma} S_\gamma^{-{2^*_\gamma}/{2}} + o(1)
\end{displaymath}                        
that is
\begin{displaymath}
\norm{v_j}_\gamma^2 (S_\gamma^{{2^*_\gamma}/{2}}- \norm{v_j}_\gamma^{2^*_\gamma-2} ) \le o(1). \
\end{displaymath}
Taking account of (\ref{eq:11}) this implies that $v_j\to 0$ strongly in $\mathscr{H}_\gamma,$ concluding the proof of \eqref{claim:3}.
\end{proof}

\begin{lemma}
\label{lem:4.3}
	The functional $I_\lambda$ satisfies the assumption 3 of Theorem \ref{thm:abstract}.
\end{lemma}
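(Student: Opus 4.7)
The plan is to exploit the spectral decomposition of $\mathscr{H}_\gamma$ furnished by Theorem \ref{thm:autovalori}. First I would set
\[
V = \overline{\operatorname{span}\{e_k, e_{k+1}, \ldots\}},
\qquad
W = \operatorname{span}\{e_1, \ldots, e_{k+m-1}\},
\]
so that $W$ is spanned by the eigenfunctions whose eigenvalues are $\le \lambda^*$, while $V$ is the closed span of those whose eigenvalues are $\ge\lambda^*$. By points 6 and 7 of Theorem \ref{thm:autovalori} one has $\operatorname{codim}V = k-1 < \infty$ and $\dim W = k+m-1$, so condition (c) is immediate and the number of pairs produced by Theorem \ref{thm:abstract} will be $\dim W - \operatorname{codim}V = m$, exactly as demanded by Theorem \ref{thm:main}.

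To verify (a), given $u\in W$ I would expand $u = \sum_{j=1}^{k+m-1} \alpha_j e_j$ and use the orthogonality of the $e_j$ in both $L^2(\Omega)$ and $\mathscr{H}_\gamma$ to obtain $\|u\|_\gamma^2 = \sum \lambda_j\alpha_j^2 \le \lambda^*\|u\|_2^2$. Combined with the H\"older bound $\|u\|_2^2 \le |\Omega|^{2/N_\gamma}\|u\|_{2^*_\gamma}^2$ this gives
\[
I_\gamma(u) \le \tfrac{1}{2}(\lambda^*-\lambda)|\Omega|^{2/N_\gamma}\|u\|_{2^*_\gamma}^2 - \tfrac{1}{2^*_\gamma}\|u\|_{2^*_\gamma}^{2^*_\gamma}.
\]
Since $\lambda^*-\lambda>0$ and $2^*_\gamma>2$, maximizing the right-hand side over $\|u\|_{2^*_\gamma}\ge 0$ yields a finite explicit constant $\beta'$, and the interval assumption \eqref{eq:interval} of Theorem \ref{thm:main} is precisely what forces $\beta' < \frac{1}{N_\gamma}S_\gamma^{N_\gamma/2} = \beta$, the Palais-Smale threshold from Lemma \ref{lem:4.2}.

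For (b), on $V$ the reverse spectral inequality $\lambda^*\|u\|_2^2 \le \|u\|_\gamma^2$ is available, so coupling it with the Sobolev estimate $\|u\|_{2^*_\gamma}^2 \le S_\gamma^{-1}\|u\|_\gamma^2$ I obtain
\[
I_\gamma(u) \ge \tfrac{1}{2}\Bigl(1-\tfrac{\lambda}{\lambda^*}\Bigr)\|u\|_\gamma^2 - \tfrac{1}{2^*_\gamma}S_\gamma^{-2^*_\gamma/2}\|u\|_\gamma^{2^*_\gamma}.
\]
Because $\lambda<\lambda^*$ and $2^*_\gamma>2$, the right-hand side is strictly positive for $\|u\|_\gamma = \rho$ with $\rho$ sufficiently small. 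Fixing such a $\rho$ and setting $\delta$ equal to the corresponding value gives (b), and $\rho$ can be shrunk further to guarantee $\delta<\beta'$.

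The main subtlety lies not in any of the individual estimates but in their joint compatibility: the upper bound $\beta'$ obtained in step (a) must strictly precede the Palais-Smale threshold $\beta$ from Lemma \ref{lem:4.2}, and it is exactly the quantitative lower bound on $\lambda$ encoded in \eqref{eq:interval} that enforces this. Once the chain $0 < \delta < \beta' < \beta$ is secured, the three clauses of assumption 3 of Theorem \ref{thm:abstract} are all fulfilled, concluding the proof.
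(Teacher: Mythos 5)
Your proof follows essentially the same route as the paper: the same choice of $V$ and $W$ from the spectral decomposition, the same spectral plus H\"older/Sobolev estimates on each subspace, and the same one-variable maximization giving $\beta' = \frac{1}{N_\gamma}(\lambda^*-\lambda)^{N_\gamma/2}\abs{\Omega}$ with $\delta<\beta'<\beta$ enforced by \eqref{eq:interval}. The only detail the paper spells out that you elide is the preliminary check that $\lambda>0$ (a consequence of \eqref{eq:interval} together with $\lambda_1 \ge S_\gamma\abs{\Omega}^{-2/N_\gamma}$), which is what makes the replacement of $-\lambda\norm{u}_2^2$ by $-(\lambda/\lambda^*)\norm{u}_\gamma^2$ in step (b) an inequality in the right direction.
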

\begin{proof}
Let $\lambda^*$ be as in the statement of Theorem
\ref{thm:main}. Then
\begin{displaymath}
\lambda^* = \lambda_k
\end{displaymath}
for some $k\in\mathbb{N}.$ Since $\lambda^*$ has multiplicity
$m\in\mathbb{N}$ by assumption, we have that
\begin{align*}
	\lambda^* =&~ \lambda_1 <\lambda_2 &\text{if} ~ k=1\\
	\lambda_{k-1} <\lambda^* =&~ \lambda_k = \dots = \lambda_{k+m-1}<\lambda_{k+m}  &\text{if} ~ k\ge 2. 
\end{align*}
Before proceeding with the proof of Theorem \ref{thm:main} we
claim that, under condition (\ref{eq:interval}),
\begin{equation}
	\label{eq:positive}
	\lambda>0.
\end{equation}
Indeed, by definition of $\lambda^*$ and taking into account Theorem
\ref{thm:autovalori} it is easily seen that
\begin{equation}
	\label{eq:lambda}
	\lambda\ge \lambda_1.
\end{equation}
Now, by the variational characterization of the first eigenvalue
$\lambda_1$ and by H\"{o}lder's inequality:
\begin{displaymath}
\norm{u}_2^2 \le \abs{\Omega}^{2/N_\gamma}\norm{u}_{2^*_\gamma}^2
\end{displaymath}
we obtain
\begin{displaymath}
\lambda_1 \ge S_\gamma\abs{\Omega}^{-2/N_\gamma}
\end{displaymath}
and \eqref{eq:positive} follows from \eqref{eq:lambda}.

With the notations of the abstract result stated in Theorem \ref{thm:autovalori}, we set
\begin{displaymath}
W = \operatorname{span} \{e_1, \dots, e_{k+m-1}\}
\end{displaymath}
and 
\begin{equation*}
	V=
	\begin{cases}
		\mathscr{H}_\gamma\quad &\text{if} ~ k=1 \\
		\{u\in \mathscr{H}_\gamma: \braket{u,e_j}_\gamma = 0 ~\forall j=1,\dots, k-1\} \quad&\text{if} ~k\ge 2.
		\end{cases}
\end{equation*}
Note that both $W$ and $V$ are closed subsets of $\mathscr{H}_\gamma$ and
\begin{equation*}
	\dim W = k +m -1 \quad \operatorname{codim} V = k-1
\end{equation*}
so that condition (3c) of Theorem \ref{thm:abstract} is satisfied.
Let us show that the functional $I_\gamma$ verifies the assumptions $(3a)$ of Theorem \ref{thm:abstract}. For this, let $u\in W$. Then,
\begin{displaymath}
u = \sum_{i=1}^{k+m-1} u_i e_i, \quad u_i\in\mathbb{R}, \; i=1\dots k+m-1.
\end{displaymath}
Since $\{e_i\}_i$ is an orthonormal basis of $L^2(\Omega)$ and an orthogonal basis of $\mathscr{H}_\gamma$, \cite[Theorem 1]{xu2023nontrivial}, we get
\begin{displaymath}
\norm{u}_\gamma^2 = \sum_{i=1}^{k+m-1} u_i^2 \norm{e_i}_\gamma^2 = \sum_{i=1}^{m+k-1} \lambda_i u_i^2 \le \lambda_k \sum_{i=1}^{m+k-1} u_i^2 = \lambda^* \norm{u}_2^2
\end{displaymath}
so that, by this and the H\"{o}lder's inequality, we have
\begin{equation}
	\label{eq:12}
		I_\gamma \le \frac{1}{2} (\lambda^*-\lambda) \norm{u}_2^2 -\frac{1}{2^*_\gamma}\norm{u}_{2^*_\gamma}^{2^*_\gamma} \le 
		\frac{1}{2} (\lambda^*-\lambda) \abs{\Omega} ^{{2}/{N_\gamma}}\norm{u}_{2^*_\gamma}^2 -\frac{1}{2^*_\gamma}\norm{u}_{2^*_\gamma}^{2^*_\gamma}
  \end{equation}
Now, for $t\ge 0$ we define the function
\begin{displaymath}
g(t) = \frac{1}{2} (\lambda^*-\lambda) \abs{\Omega}^{2/N_\gamma} t^2- \frac{1}{2^*_\gamma} t^{2^*_\gamma}.
\end{displaymath}
For every $t>0$ we have
\begin{displaymath}
g'(t) = (\lambda^*-\lambda) \abs{\Omega} ^{2/N_\gamma}t - t^{2^*_\gamma-1}
\end{displaymath}
so that $g'(t) \ge 0$ if and only if
\begin{displaymath}
t\le \overline{t} = \left[(\lambda^*-\lambda) \abs{\Omega}^{2/N_\gamma}\right]^{1/(2^*_\gamma -2)}.
\end{displaymath}
As a consequence of this, $\overline{t}$ is a maximum point for $g$ and so for any $t\ge 0$
\begin{equation}
	\label{eq:13}
	g(t) \le \max_{t\ge 0} g(t) = g(\overline{t}) = \frac{1}{N_\gamma} (\lambda^*-\lambda) ^{N_\gamma/2} \abs{\Omega}
\end{equation}
By (\ref{eq:12}) and (\ref{eq:13}) we get
\begin{equation*}
	\sup_{u\in W} I_\gamma (u) \le \max_{t\ge 0} g(t) = \frac{1}{N_\gamma} (\lambda^*-\lambda) ^{N_\gamma/2} \abs{\Omega} >0 
\end{equation*}
so condition $(3a)$ is satisfied with $\beta' = \frac{1}{N_\gamma} (\lambda^*-\lambda) ^{N_\gamma/2} \abs{\Omega}.$

Now let us prove that $I_\gamma$ satisfies condition $(3b)$. For every $u \in V$ there holds
\begin{equation*}
	\norm{u}_\gamma^2 \ge \lambda^* \norm{u}_2^2
\end{equation*}
Indeed,  the assertion is trivial if $u\equiv 0$, while if $u\in V\setminus \{0\}$ it follows from the variational characterization of $\lambda^* = \lambda_k$ given by
\begin{displaymath}
\lambda_k = \min_{u\in V\setminus \{0\}} \frac{\norm{u}_\gamma^2}{\norm{u}_2^2}
\end{displaymath}
as proved in \cite[Theorem 1]{xu2023nontrivial}. 
Thus, by the definition of $S_\gamma$ and taking into account that $\lambda>0,$ it follows that
\begin{equation*}
	I_\gamma (u) \ge \frac{1}{2} \left( 1- \frac{\lambda}{\lambda^*}\right) \norm{u}_\gamma^2 - \frac{1}{2^*_\gamma S_\gamma^{2^*_\gamma/2}} \norm{u}_\gamma^{2^*_\gamma} = \norm{u}_\gamma^2 \left( \frac{1}{2} \left( 1- \frac{\lambda}{\lambda^*}\right) - \frac{1}{2^*_\gamma S_\gamma^{2^*_\gamma/2}} \norm{u}_\gamma^{2^*_\gamma -2}\right)
\end{equation*}
Now, let $u\in V$ be such that $\norm{u}_\gamma = \rho>0.$ Since $2^*_\gamma>2,$ we can choose $\rho$ sufficiently small, say $\rho\le\overline{\rho}$ with $\overline{\rho}> 0,$ so that
\begin{equation*}
	\frac{1}{2} \left( 1- \frac{\lambda}{\lambda^*}\right) - \frac{1}{2^*_\gamma S_\gamma^{2^*_\gamma/2}} \norm{u}_\gamma^{2^*_\gamma -2}>0
\end{equation*}  
 and
 \begin{displaymath}
 \rho^2 \left( \frac{1}{2} \left( 1- \frac{\lambda}{\lambda^*}\right) - \frac{1}{2^*_\gamma S_\gamma^{2^*_\gamma/2}} \rho^{2^*_\gamma -2}\right) <  \rho^2 \left( \frac{1}{2} \left( 1- \frac{\lambda}{\lambda^*}\right)\right) < \beta' = \frac{1}{N_\gamma} (\lambda^*-\lambda) ^{N_\gamma/2} \abs{\Omega}
 \end{displaymath}
 \end{proof}
Now we can conclude the proof of Theorem \ref{thm:main}. It is clear that $I_\lambda$ satisfies assumption 1 of Theorem \ref{thm:abstract}. From Lemma \ref{lem:4.2} it easily follows that $I_\gamma$ satisfies condition (2) of Theorem \ref{thm:abstract} with 
\begin{displaymath}
\beta = \frac{1}{N_\gamma} S_\gamma^{N_\gamma/2}.
\end{displaymath}
Also, by Lemma \ref{lem:4.3} we get that $I_\gamma$ verifies condition (3) with
\begin{align*}
	\beta' &= \frac{1}{N_\gamma} (\lambda^*-\lambda) ^{N_\gamma/2} \abs{\Omega}\\
	\rho &= \overline{\rho} \\
	\delta &=  \rho^2 \left( \frac{1}{2} \left( 1- \frac{\lambda}{\lambda^*}\right) - \frac{1}{2^*_\gamma S_\gamma^{2^*_\gamma/2}} \rho^{2^*_\gamma -2}\right)
\end{align*}
Observe that $0<\delta<\beta'<\beta$.
By Theorem \ref{thm:abstract}, $I_\gamma$ has $m$ pairs $\{-u_{\lambda,u}, u_{\lambda,i}\}$ of critical points whose critical values $I(\pm u_{\lambda,i})$ are such that 
\begin{equation}
	\label{eq:14}
	0<\delta\le I_\gamma(\pm u_{\lambda,i}) \le \beta'
\end{equation}
for any $i=1,\dots, m.$
Since $I_\gamma(0) =0$ and \eqref{eq:14} holds true, it is easy to see that these critical points are all different from the trivial function. Hence problem (\ref{problem:grushin}) admits $m$ pairs of nontrivial weak solutions.

Now, fix $i\in\{1,\dots, m\}.$ By (\ref{eq:14}) we obtain
\begin{displaymath}
\beta' \ge I_\gamma(u_{\lambda,i}) = I_\gamma(u_{\lambda,i}) -\frac{1}{2} \braket{I'_\gamma(u_{\lambda,i}, u_{\lambda,i})} = \left(\frac{1}{2} -\frac{1}{2^*_\gamma}\right) \norm{u_{\lambda,i}}_{2^*_\gamma}^{2^*_\gamma} = \frac{1}{N_\gamma}\norm{u_{\lambda,i}}_{2^*_\gamma}^{2^*_\gamma}
\end{displaymath}
so that, passing to the limit as $\lambda\to \lambda^*$, it follows that (note that $\beta'\to 0$ as $\lambda\to \lambda^*$)
\begin{equation}
	\label{eq:15}
	\norm{u_{\lambda,i}}_{2^*_\gamma}^{2^*_\gamma} \to 0.
\end{equation}
Then, since $L^{2^*_\gamma} (\Omega) \hookrightarrow L^2(\Omega)$ continuously (being $\Omega$ bounded), we also get
\begin{equation}
	\label{eq:16}
	\norm{u_{\lambda,i}}_2^2 \to 0 \quad \text{as} \quad \lambda\to \lambda^*.
\end{equation}
Arguing as above, we have
\begin{displaymath}
\beta' \ge I_\gamma(u_{\lambda,i}) = \frac{1}{2} \norm{u_{\lambda,i}}_\gamma^2 -\frac{\lambda}{2} \norm{u_{\lambda,i}}_2^2 - \frac{1}{2^*_\gamma} \norm{u_{\lambda,i}}_{2^*_\gamma}^{2^*_\gamma}
\end{displaymath}
which, combined with (\ref{eq:15}) and (\ref{eq:16}) gives
\begin{displaymath}
\norm{u_{\lambda,i}}_\gamma \to 0 \quad \text{as} \quad \lambda\to\lambda^*.
\end{displaymath}
To conclude, we apply \cite[Theorem 3.1 and Remark 3.2]{loiudice} to the operator $V=\lambda + \abs{u}^{2^*_\gamma -2}\in L^{N_\gamma/2}(\Omega)$ and obtain that the solutions $u_{\lambda,i}$ are bounded.
This concludes the proof of the Theorem \ref{thm:main}.

\section*{Acknowledgements}

The three authors are members of the GNAMPA section of INdAM, Istituto Nazionale di Alta Matematica.

\bibliographystyle{plain}
\bibliography{Grushin_operator}

\bigskip

Paolo Malanchini, Dipartimento di Matematica e Applicazioni, Universit\`a degli Studi di Milano Bicocca, via R. Cozzi 55, I-20125 Milano, Italy.

\medskip

Giovanni Molica Bisci, Dipartimento di Scienze Pure e Applicate, Universit\`a di Urbino Carlo Bo, Piazza della Repubblica 13, Urbino, Italy.

\medskip

Simone Secchi, Dipartimento di Matematica e Applicazioni, Universit\`a degli Studi di Milano Bicocca, via R. Cozzi 55, I-20125 Milano, Italy.
\end{document}